\newcommand{\comment}[1]{}
\newcommand{\R}{{\mathbb R}}
\newcommand{\X}{{\mathcal X}}
\def\supp{{\mbox{\small\rm  supp}}\,}
\begin{document}

\title[On weak$^*$-convergence in $H^1(\X)$]{On weak$^*$-convergence in the Hardy space $H^1$ over spaces of homogeneous type}         % Enter your title between curly braces

\author{Ha Duy Hung and Luong Dang Ky $^*$}

\keywords{$H^1$, BMO, VMO, spaces of homogeneous type}
\subjclass[2010]{42B30}
\thanks{ 	
	$^{*}$Corresponding author}

\begin{abstract}
	Let $\X$ be a complete space of homogeneous type. In this note, we prove that the weak$^*$-convergence is true in the Hardy space $H^1(\X)$ of Coifman and Weiss.
\end{abstract}

\maketitle
\newtheorem{theorem}{Theorem}[section]
\newtheorem{lemma}{Lemma}[section]
\newtheorem{proposition}{Proposition}[section]
\newtheorem{remark}{Remark}[section]
\newtheorem{corollary}{Corollary}[section]
\newtheorem{definition}{Definition}[section]
\newtheorem{example}{Example}[section]
\numberwithin{equation}{section}
\newtheorem{Theorem}{Theorem}[section]
\newtheorem{Lemma}{Lemma}[section]
\newtheorem{Proposition}{Proposition}[section]
\newtheorem{Remark}{Remark}[section]
\newtheorem{Corollary}{Corollary}[section]
\newtheorem{Definition}{Definition}[section]
\newtheorem{Example}{Example}[section]
\newtheorem*{theoremjj}{Theorem J-J}
\newtheorem*{theorema}{Theorem A}
\newtheorem*{theoremb}{Theorem B}
\newtheorem*{theoremc}{Theorem C}

\section{Introduction}

Let $d$ be a quasi-metric on a set $\X$, that is, $d$ is a nonnegative function on $\mathcal X\times \mathcal X$ satisfying
\begin{enumerate}[\hskip 0.5cm $\bullet$]
	\item $d(x,y)=d(y,x)$,
	\item $d(x,y)>0$ if and only if $x\ne y$,
	\item there exists a constant $\kappa_1\geq 1$ such that for all $x,y,z\in \mathcal X$,
	\begin{equation}\label{the metric constant}
		d(x,z)\leq \kappa_1(d(x,y)+ d(y,z)).
	\end{equation}
\end{enumerate}
A trip $(\mathcal X, d,\mu)$ is called a {\sl space of homogeneous type} in the sense of Coifman and Weiss if $\mu$ is a regular Borel measure satisfying {\sl doubling property}, i.e. there exists a constant $\kappa_2 >1$ such that for all $x\in \mathcal X$ and $r>0$,
\begin{equation}\label{the doubling constant}
\mu(B(x,2r))\leq \kappa_2 \mu(B(x,r)).
\end{equation}

In this paper, we always assume that $(\mathcal X, d,\mu)$ is a complete space of homogeneous type, $\mu(\X)=\infty$ and $0<\mu(B)<\infty$ for any ball $B\subset \X$.

Recall (see \cite{CW}) that a function $a$ is called an {\sl $H^1$-atom} related to the ball $B\subset \X$ if
\begin{enumerate}[\hskip 0.5cm $\bullet$]
\item $\supp a\subset B$;
	
\item  $\|a\|_{L^\infty(\X)}\leq \mu(B)^{-1}$;
	
\item $\int_{\X} a(x) d\mu(x)=0$.
\end{enumerate}
The Hardy space $H^1(\X)$ is defined as the set of all $f=\sum_{j=1}^\infty \lambda_j a_j$ with $\{a_j\}_{j=1}^\infty$ are $H^1$-atoms and $\{\lambda_j\}_{j=1}^\infty \subset \mathbb C$ is such that $\sum_{j=1}^\infty |\lambda_j|<\infty$. The norm on $H^1(\X)$ is then defined by
$$\|f\|_{H^1(\X)}:= \inf\left\{\sum_{j=1}^\infty |\lambda_j|: f=\sum_{j=1}^\infty \lambda_j a_j\right\}.$$

It is well-known (see \cite{CW}) that the dual space of $H^1(\X)$ is $BMO(\X)$ the space of all  locally integrable functions $f$ with
$$\|f\|_{BMO(\X)}:=\sup\limits_{B}\frac{1}{|B|}\int_B \Big|f(x)-\frac{1}{|B|}\int_B f(y) d\mu(y)\Big|d\mu(x)<\infty,$$
where the supremum is taken over all balls $B\subset \X$. Furthermore,  $H^1(\X)$  is itself the dual space of $VMO(\X)$ the closure in $BMO(\X)$ norm of the set $C_c(\X)$ of all continuous functions with compact support.

 The aim of the present paper is to establish the following.

\begin{theorem}\label{the main theorem}
Suppose that $\{f_n\}_{n=1}^\infty$ is a bounded sequence in $H^1(\X)$, and that $\lim_{n\to\infty} f_n(x) = f(x)$ for almost every $x\in\X$. Then, $f\in H^1(\X)$ and $\{f_n\}_{n= 1}^\infty$ weak$^*$-converges to $f$, that is, for every $\varphi\in VMO(\X)$, we have
$$\lim_{n\to\infty} \int_{\X} f_n(x) \varphi(x)d\mu(x) = \int_{\X} f(x) \varphi(x) d\mu(x).$$
\end{theorem}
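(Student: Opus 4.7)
The plan rests on the Coifman--Weiss duality $H^1(\X)=VMO(\X)^*$ recalled in the introduction, together with the density of $C_c(\X)$ in $VMO(\X)$ (by the very definition of $VMO(\X)$). First I would reduce the theorem to a single statement: once I establish
\begin{equation*}
\lim_{n\to\infty}\int_{\X} f_n(x)\,\psi(x)\,d\mu(x) = \int_{\X} f(x)\,\psi(x)\,d\mu(x),\qquad \psi\in C_c(\X),\tag{$\ast$}
\end{equation*}
everything else follows routinely. Indeed, by Banach--Alaoglu applied to the dual space $H^1(\X)=VMO(\X)^*$, a subsequence $\{f_{n_k}\}$ converges weak-$*$ to some $g\in H^1(\X)$. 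Pairing with $\psi\in C_c(\X)\subset VMO(\X)$ and invoking $(\ast)$ gives $\int g\psi\,d\mu=\int f\psi\,d\mu$ for every such $\psi$, whence $g=f$ a.e.\ and $f\in H^1(\X)$. The full weak-$*$ convergence of $\{f_n\}$ then follows by the standard ``every subsequence has a further subsequence'' argument. Finally, for general $\varphi\in VMO(\X)$ and any $\varepsilon>0$, write $\varphi=\psi_\varepsilon+(\varphi-\psi_\varepsilon)$ with $\psi_\varepsilon\in C_c(\X)$ and $\|\varphi-\psi_\varepsilon\|_{BMO}<\varepsilon$; the $\psi_\varepsilon$-term converges by $(\ast)$, while $\bigl|\int f_n(\varphi-\psi_\varepsilon)\,d\mu\bigr|\le\|f_n\|_{H^1}\,\varepsilon$ via the $H^1$--$BMO$ duality.

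To prove $(\ast)$, fix $\psi\in C_c(\X)$ with $K:=\supp\psi$ compact and $\|\psi\|_\infty\le M_0$. Fatou's lemma gives $f\in L^1(\X)$, so $\int_{E}|f|\,d\mu\to 0$ whenever $\mu(E)\to 0$. Egoroff's theorem on $K$ yields, for every $\eta>0$, a measurable $E_\eta\subset K$ with $\mu(E_\eta)<\eta$ such that $f_n\to f$ uniformly on $K\setminus E_\eta$. Splitting the integral,
\[
\Bigl|\int_\X (f_n-f)\psi\,d\mu\Bigr| \le M_0\,\mu(K)\,\|f_n-f\|_{L^\infty(K\setminus E_\eta)} + M_0\!\int_{E_\eta}\!\bigl(|f_n|+|f|\bigr)\,d\mu ;
\]
the first summand vanishes as $n\to\infty$, and the $|f|$-part of the second is small for small $\eta$.

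The main obstacle is therefore the uniform equi-integrability statement $\sup_n\int_{E_\eta}|f_n|\,d\mu\to 0$ as $\eta\to 0$ along all $E_\eta\subset K$ with $\mu(E_\eta)<\eta$. This is \emph{not} a consequence of $H^1$-boundedness alone (normalized atoms on balls of vanishing measure form an $H^1$-bounded sequence that fails to be equi-integrable), so the a.e.\ convergence $f_n\to f$ must enter essentially. My approach would be to feed the atomic decomposition $f_n=\sum_j\lambda_{n,j}a_{n,j}$ (with $\sum_j|\lambda_{n,j}|\le 2\sup_n\|f_n\|_{H^1}$) through a three-way split: atoms whose defining ball has radius bounded below are uniformly $L^\infty$-controlled and contribute at most $O(\eta)$; atoms supported far from $K$ contribute nothing; atoms with small radius meeting $K$ are handled by exploiting their mean-zero property $\int a_{n,j}\,d\mu=0$ against a Lipschitz smoothing of $\psi$, combined with a Vitali-type covering argument (relying on the doubling of $\mu$), while the a.e.\ convergence controls which such balls may carry non-negligible weight as $n\to\infty$. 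Carrying out this analysis quantitatively in a general space of homogeneous type, where one lacks convolutions or ambient smoothing operators, is the technical heart of the proof.
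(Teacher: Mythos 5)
Your opening reductions are sound and essentially coincide with what the paper dismisses as ``a standard function theoretic argument'': passing to $\varphi\in C_c(\X)$ by density, identifying the weak$^*$ limit, and running the subsequence argument are all routine (modulo the separability of $VMO(\X)$ needed for sequential Banach--Alaoglu, which holds here but deserves a word). The genuine gap is in your treatment of $(\ast)$, which is where the theorem actually lives. Your displayed estimate already concedes the game: bounding $\bigl|\int_{E_\eta}(f_n-f)\psi\,d\mu\bigr|\le M_0\int_{E_\eta}(|f_n|+|f|)\,d\mu$ puts absolute values inside the integral, and, as you yourself observe, $\sup_n\int_{E_\eta}|f_n|\,d\mu$ need not be small --- so the split has to be repaired \emph{before} any atomic analysis, not after. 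The repair you sketch cannot work as stated, for two reasons. First, once you localize to $E_\eta$ (equivalently, multiply by $\chi_{E_\eta}$), the moment condition $\int a_{n,j}\,d\mu=0$ is destroyed, so the mean-zero-against-smoothed-$\psi$ device does not apply to the term you actually need to control. Second, the claim that ``the a.e.\ convergence controls which such balls may carry non-negligible weight'' is unsubstantiated, and as a constraint on atomic decompositions it is simply false: already in $\R$, take $f_n=f+a_n$ with $a_n$ an atom adapted to $(-1/n,1/n)$; then $f_n\to f$ a.e.\ while unit coefficient mass sits at arbitrarily small scales for every $n$. You flag exactly this part as ``the technical heart'' and leave it unexecuted, so the proposal is a program rather than a proof.

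What closes the gap in the paper is the Jones--Journ\'e cutoff, which your plan misses entirely. With $E$ the Egorov set chosen of measure $<\alpha\varepsilon e^{-1/\varepsilon}$, one sets $\tau:=\max\{0,\,1+\varepsilon\log(M(\chi_E))\}$. Then $\tau=1$ on $E$, so the uniform-convergence term carries the factor $(1-\tau)$ and is untouched; $\supp\tau\subset\{M(\chi_E)>e^{-1/\varepsilon}\}$ has measure $<\beta$ by the weak $(1,1)$ bound of Lemma~\ref{the weak type (1,1) for the Hardy-Littlewood function}, which kills the $f$-part; and, crucially, combining the Coifman--Rochberg bound $\|\log(M(\chi_E))\|_{BMO(\X)}\le C_2$ of Lemma~\ref{a lemma of Coifman and Rochberg} with the small-ball oscillation estimate (\ref{Macias, Segovia and Ky}) yields $\|\varphi\tau\|_{BMO(\X)}\le(2+2C_1+3C_2)\varepsilon$, i.e.\ estimate (\ref{BMO-estimate of Jones and Journe}). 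The bad-set term is then handled wholesale by duality, $\bigl|\int_{\X} f_n\varphi\tau\,d\mu\bigr|\le\|f_n\|_{H^1(\X)}\|\varphi\tau\|_{BMO(\X)}\le(2+2C_1+3C_2)\varepsilon$: the cancellation in $f_n$ is exploited through the $H^1$--$BMO$ pairing rather than atom by atom, and absolute values never enter. This single multiplier construction is precisely the mechanism your atomic three-way split would have to reproduce, and without it (or an equivalent) the argument does not go through.
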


It should be pointed out that, when $\X=\R^d$, Theorem \ref{the main theorem} was proved firstly by Jones and Journ\'e \cite{JJ} when trying to answer a question of Lions and Meyer \cite{CLMS}. When $\X$ is a norm space of homogeneous type, Theorem \ref{the main theorem} was established by Grafakos and Rochberg \cite{GR}.

%%%%%%%%%%%%%%%%%%%%%%%%%%%%%%%%%%%%%%%%%%%%%%%%%%%%%%%%%%%%%%%%%%%%%%

\section{Proof of Theorem \ref{the main theorem}}

%\begin{remark}\label{Macias and Segovia}
%By the work of Mac\'ias and Segovia (see \cite[Theorem 2 and Theorem 3]{MS}), we can assume that, for any $x\in \X$ and $r>0$, the ball $B(x,r)$ is an open set. Moreover, the topology induced by $d$ coincides with the topology induced by the quasi-metric $\delta: \X\times \X\to \R$ defined by
%$$\delta(x,y):= \begin{cases}\inf\{\mu(B):\mbox{$B$ is a ball containing $x$ and $y$}\},&\mbox{if $x\ne y$},\\
%0,& \mbox{if $x=y$}.
%\end{cases}$$
%\end{remark}

Let $M$ be the classical Hardy-Littlewood maximal function. The following result is well-known (see \cite{CW}).

\begin{lemma}\label{the weak type (1,1) for the Hardy-Littlewood function}
There exists a constant $C_1>0$ such that
$$\mu(\{x\in \X: Mg(x)>\lambda\})\leq C_1 \frac{1}{\lambda} \int_{\X} |g(x)| d\mu(x)$$
for all $g\in L^1(\X)$ and $\lambda>0$.
\end{lemma}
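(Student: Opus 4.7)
The plan is a standard Vitali covering argument adapted to $(\X, d, \mu)$: reduce the problem to a maximal operator whose defining balls have uniformly bounded radii, extract a disjoint subfamily covering the level set, and then use doubling to absorb the geometric dilation factor. The resulting constant $C_1$ will depend only on $\kappa_1$ and $\kappa_2$.

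To deal with the fact that balls witnessing $Mg(x) > \lambda$ may \emph{a priori} have arbitrarily large radii in a general space of homogeneous type, I would first introduce for each $R > 0$ the truncated maximal function
$$M_R g(x) := \sup \Bigl\{ \frac{1}{\mu(B)} \int_B |g(z)|\, d\mu(z) : x \in B = B(y,r),\ 0 < r \leq R \Bigr\},$$
establish the weak-type $(1,1)$ bound for $M_R$ with a constant independent of $R$, and then let $R \to \infty$. Since $M_R g \nearrow Mg$ pointwise, the level sets $\{M_R g > \lambda\}$ increase to $\{Mg > \lambda\}$, and upward continuity of $\mu$ delivers the stated inequality for $Mg$.

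For the bounded-radius case, fix $\lambda > 0$ and set $E_{R,\lambda} := \{x \in \X : M_R g(x) > \lambda\}$. For each $x \in E_{R,\lambda}$ select a ball $B_x = B(y_x, r_x)$ with $x \in B_x$, $r_x \leq R$, and $\mu(B_x) < \lambda^{-1} \int_{B_x} |g|\, d\mu$. Since all radii are at most $R$, the Vitali-type covering lemma on a quasi-metric doubling space, where the quasi-triangle inequality \eqref{the metric constant} forces a dilation constant $K = K(\kappa_1)$, furnishes a countable pairwise disjoint subfamily $\{B_{x_i}\}_i$ such that $E_{R,\lambda} \subseteq \bigcup_i B(y_{x_i}, K r_{x_i})$. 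Iterating the doubling condition \eqref{the doubling constant} $\lceil \log_2 K \rceil$ times gives $\mu(B(y_{x_i}, K r_{x_i})) \leq C(\kappa_1, \kappa_2) \mu(B_{x_i})$, and summing yields
$$\mu(E_{R,\lambda}) \leq \sum_i \mu(B(y_{x_i}, K r_{x_i})) \leq C \sum_i \mu(B_{x_i}) < \frac{C}{\lambda} \sum_i \int_{B_{x_i}} |g|\, d\mu \leq \frac{C}{\lambda} \int_{\X} |g|\, d\mu,$$
where the last step uses the disjointness of $\{B_{x_i}\}$. Setting $C_1 := C$ and sending $R \to \infty$ proves the lemma.

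The only real point of care is the Vitali-type selection lemma itself in the quasi-metric setting: one runs the classical greedy argument, processing balls in order of (roughly) decreasing radius within dyadic scales, and invokes \eqref{the metric constant} to conclude that whenever two chosen balls $B_i$, $B_j$ with $r_i \geq r_j$ intersect, $B_j \subseteq B(y_i, K r_i)$ for an appropriate $K = K(\kappa_1)$. This is the standard Coifman--Weiss ingredient and is where $\kappa_1$ enters quantitatively; the remainder of the proof is bookkeeping.
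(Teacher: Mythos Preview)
Your argument is the standard Vitali covering proof and is correct; the truncation $M_R$ to force bounded radii, followed by monotone convergence of the level sets, is exactly the right way to handle the quasi-metric setting, and the dilation constant $K = K(\kappa_1)$ combined with iterated doubling gives the stated bound with $C_1 = C_1(\kappa_1,\kappa_2)$.

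The paper, however, does not prove this lemma at all: it simply records it as ``well-known'' and refers to Coifman--Weiss \cite{CW}. So there is nothing to compare at the level of argument---you have supplied a full proof where the authors supply only a citation. Your write-up could stand in for the missing details, though in context the authors clearly regard the result as background and reserve their effort for the BMO estimate \eqref{BMO-estimate of Jones and Journe}.
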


By a standard argument (cf. \cite{CR, GR}), we get the following lemma.

\begin{lemma}\label{a lemma of Coifman and Rochberg}
There exists a constant $C_2>0$ such that, for any Borel set $E$,
$$\|\log(M(\chi_E))\|_{BMO(\X)}\leq C_2.$$
\end{lemma}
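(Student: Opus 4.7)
The strategy I will follow is the classical Coifman--Rochberg approach, carried out in the quasi-metric setting. Fix once and for all some exponent $\delta\in(0,1)$ (for instance $\delta=1/2$). The plan is to first show that $w:=(M(\chi_E))^\delta$ belongs to the Muckenhoupt class $A_1$ on $\X$ with constant depending only on $\kappa_1$, $\kappa_2$, $\delta$, and the constant $C_1$ from Lemma \ref{the weak type (1,1) for the Hardy-Littlewood function} (in particular, independent of the Borel set $E$), and then to deduce the $BMO$ bound for $\log w=\delta\log(M(\chi_E))$ by a standard argument using Jensen's inequality.

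For the $A_1$ step, I would fix a ball $B=B(x_0,r)$, set $B^*:=B(x_0,2\kappa_1 r)$, and split $\chi_E=\chi_{E\cap B^*}+\chi_{E\setminus B^*}$. Subadditivity of $t\mapsto t^\delta$ on $[0,\infty)$ gives $(M\chi_E)^\delta\le(M\chi_{E\cap B^*})^\delta+(M\chi_{E\setminus B^*})^\delta$. For the local part, Kolmogorov's inequality (a direct consequence of the weak type $(1,1)$ bound of Lemma \ref{the weak type (1,1) for the Hardy-Littlewood function}) yields
\[
\frac{1}{\mu(B)}\int_B (M\chi_{E\cap B^*})^\delta\,d\mu\le \frac{C}{1-\delta}\left(\frac{\mu(E\cap B^*)}{\mu(B)}\right)^\delta.
\]
Averaging $\chi_E$ over the ball $B(x_0,3\kappa_1 r)\supset B^*$ and invoking doubling shows that $M\chi_E(y)\ge c\,\mu(E\cap B^*)/\mu(B)$ for every $y\in B$, so the right-hand side above is $\le C\inf_{y\in B}(M\chi_E(y))^\delta$. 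For the far part, a quick quasi-triangle computation shows that any ball meeting both $B$ and $\X\setminus B^*$ must have radius at least $r/(2\kappa_1)$, and doubling then forces $M\chi_{E\setminus B^*}(y)\le C\,M\chi_{E\setminus B^*}(y')$ for all $y,y'\in B$. Hence $(M\chi_{E\setminus B^*})^\delta\le C\inf_{y'\in B}(M\chi_E(y'))^\delta$ pointwise on $B$, and combining the two estimates gives the $A_1$ condition for $w$.

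To pass from $A_1$ to $BMO$, I would take $c_B:=\log\bigl(\mu(B)^{-1}\int_B w\,d\mu\bigr)$. The $A_1$ condition, which reads $\mu(B)^{-1}\int_B w\,d\mu\le C w(x)$ a.e.\ on $B$, gives $c_B-\log w(x)\le\log C$ a.e., while Jensen's inequality applied to the concave function $\log$ gives $\mu(B)^{-1}\int_B \log w\,d\mu\le c_B$, i.e., $c_B-\log w$ has nonnegative mean on $B$. Consequently the positive and negative parts of $c_B-\log w$ have comparable $L^1$-mass on $B$, and
\[
\frac{1}{\mu(B)}\int_B|\log w-c_B|\,d\mu\le 2\log C,
\]
so $\log w\in BMO(\X)$ with norm bounded independently of $E$, and the lemma follows with $C_2=2(\log C)/\delta$.

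The hard part will be the careful bookkeeping in the quasi-metric setting: one must track how the constants $\kappa_1$ and $\kappa_2$ propagate through the decomposition into local and far parts, and verify that the enlargement factor $2\kappa_1$ suffices for the quasi-triangle argument that underlies the far-part comparison. Once this is done, the implication \emph{$A_1\Rightarrow \log\in BMO$} is entirely classical and relies only on doubling and Jensen's inequality.
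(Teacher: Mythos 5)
Your proposal is correct and follows essentially the same route as the paper, which gives no proof of its own but invokes precisely this ``standard argument'' of Coifman--Rochberg \cite{CR} (as adapted in \cite{GR}): the uniform $A_1$ bound for $(M(\chi_E))^\delta$ via the local/far splitting with Kolmogorov's inequality, followed by the Jensen-inequality passage from $A_1$ weights to $BMO$. Two immaterial points of bookkeeping: with the paper's definition of $\|\cdot\|_{BMO(\X)}$ (oscillation about the mean) your final estimate costs an extra factor of $2$, so $C_2=4(\log C)/\delta$ rather than $2(\log C)/\delta$, and the degenerate case $\mu(E)=0$, where $\log(M(\chi_E))\equiv-\infty$, is tacitly excluded (in the application $E$ may be enlarged to have positive measure, or $\tau=0$ there anyway).
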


\begin{proof}[Proof of Theorem \ref{the main theorem}]
 Without loss of generality, we can assume that $\|f_n\|_{H^1}\leq 1$ for all $n\geq 1$. Since the Fatou's lemma, we see that $\|f\|_{L^1(\X)}\leq 1$. By this and a standard function theoretic argument, it suffices to show that
\begin{equation}\label{the main theorem, 1}
\lim_{n\to\infty}\int_{\X} f_n(x) \varphi(x) d\mu(x) = \int_{\X} f(x) \varphi(x) d\mu(x)
\end{equation}
for all $\varphi\in C_c(\X)$ with $\|\varphi\|_{L^1(\X)}, \|\varphi\|_{L^\infty(\X)}\leq 1$.

Fix $\varepsilon >0$. As $f\in L^1(\X)$, there exists a positive number $\beta$ such that $\int_{E}|f|d\mu<\varepsilon$ for any Borel set $A$ satisfying $\mu(A)<\beta$. By $\varphi\in C_c(\X)$ and \cite[Theorems 2 and 3]{MS}, there exists $\alpha\in (0, \frac{\beta}{C_1 \varepsilon})$, where the constant $C_1$ is as in Lemma \ref{the weak type (1,1) for the Hardy-Littlewood function}, such that if $B$ is a ball in $\X$ satisfying $\mu(B)<\alpha$, then
\begin{equation}\label{Macias, Segovia and Ky}
|\varphi(x) - \varphi(y)|<\varepsilon
\end{equation}
for all $x,y\in B$. On the other hand, by the Egorov's theorem and the regularity of $\mu$, there exists an open set $E\subset\X$ such that $\mu(E)< \alpha \varepsilon e^{-\frac{1}{\varepsilon}}$ and $f_n\to f$ uniformly on  $\supp\varphi\setminus E$. Define $\tau:= \max\{0, 1+ \varepsilon \log(M(\chi_{E}))\}$. It is clear that $0\leq \tau\leq 1$ and $\tau(x)=1$ for all $x\in E$. We now claim that
\begin{equation}\label{BMO-estimate of Jones and Journe}
\|\varphi\tau\|_{BMO(\X)}\leq (2+ 2C_1+ 3C_2)\varepsilon,
\end{equation}
where the constant $C_2$ is as in Lemma \ref{a lemma of Coifman and Rochberg}.

Assume that (\ref{BMO-estimate of Jones and Journe}) holds for a moment. Since $f_n\to f$ uniformly on  $\supp\varphi\setminus E$, there exists $n_0\in\mathbb N$ such that $\|f_n-f\|_{L^{\infty}(\supp \varphi\setminus E)}<\varepsilon$ for all $n\geq n_0$. Applying Lemma \ref{the weak type (1,1) for the Hardy-Littlewood function} with $g=\chi_E$ and $\lambda= e^{-\frac{1}{\varepsilon}}$, we obtain that 
$$\mu(\supp \tau)\leq C_1 \mu(E) e^{\frac{1}{\varepsilon}}< C_1 \alpha \varepsilon <\beta.$$
 Therefore, for any $n\geq n_0$,
\begin{eqnarray*}
&&\left|\int_{\X} (f_n-f)\varphi d\mu\right| \\
&\leq& \left|\int_{\supp \varphi\setminus E} (f_n -f) \varphi(1-\tau) d\mu\right|  + \left|\int_{\supp \tau} f \varphi \tau d\mu\right| + \left|\int_{\X} f_n \varphi \tau d\mu\right|\\
&\leq& \|f_n-f\|_{L^{\infty}(\supp \varphi\setminus E)} \|\varphi\|_{L^1(\X)} + \|\varphi\|_{L^\infty(\X)}\int_{\supp\tau} |f| d\mu + \|f_n\|_{H^1(\X)}\|\varphi\tau\|_{BMO(\X)}\\
&\leq& \varepsilon + \varepsilon + (2+ 2C_1 + 3 C_2)\varepsilon= (4+ 2C_1 + 3 C_2) \varepsilon.
\end{eqnarray*}
This proves that (\ref{the main theorem, 1}) holds.

Let us now show (\ref{BMO-estimate of Jones and Journe}). Let $B$ be an arbitrary ball in $\X$. If $\mu(B)\geq \alpha$, then
\begin{eqnarray*}
\frac{1}{\mu(B)}\int_{B}\left|\varphi(x) \tau(x) -\frac{1}{\mu(B)}\int_B \varphi(y)\tau(y)d\mu(y)\right|d\mu(x) &\leq& 2 \frac{1}{\mu(B)} \int_B |\varphi\tau| d\mu\\
&\leq& 2 \frac{1}{\alpha}\|\varphi\|_{L^\infty(\X)} \mu(\supp \tau)\\
&\leq& 2 C_1 \varepsilon.
\end{eqnarray*}
Otherwise, by (\ref{Macias, Segovia and Ky}) and Lemma \ref{a lemma of Coifman and Rochberg}, we have
\begin{eqnarray*}
	&&\frac{1}{\mu(B)}\int_{B}\left|\varphi(x) \tau(x) -\frac{1}{\mu(B)}\int_B \varphi(y)\tau(y)d\mu(y)\right|d\mu(x)\\
	&\leq& 2 \frac{1}{\mu(B)}\int_{B}\left| \varphi(x) \tau(x) -\frac{1}{\mu(B)}\int_B \varphi(y) d\mu(y) \frac{1}{\mu(B)}\int_B \tau(y) d\mu(y) \right|d\mu(x) \\
	&\leq& 2\|\tau\|_{L^\infty(\X)}\sup_{x,y\in B} |\varphi(x) -\varphi(y)| + 2 \|\varphi\|_{L^\infty(\X)} \|\tau\|_{BMO(\X)}\\
	&\leq& 2 \varepsilon + 2 \frac{3}{2} \|\varepsilon \log(M(\chi_E))\|_{BMO(\X)}\leq (2+ 3 C_2)\varepsilon,
\end{eqnarray*}
which proves that (\ref{BMO-estimate of Jones and Journe}) holds.

\end{proof}

{\bf Acknowledgements.} The paper was completed when the second author was visiting
to Vietnam Institute for Advanced Study in Mathematics (VIASM). He would like
to thank the VIASM for financial support and hospitality.

\bigskip

\noindent Ha Duy Hung

\medskip

\noindent High School for Gifted Students,
Hanoi National University of Education, 136 Xuan Thuy, Hanoi, Vietnam

\smallskip

\noindent {\it E-mail:} \texttt{hunghaduy@gmail.com}

\bigskip

\noindent Luong Dang Ky

\medskip

\noindent Department of Mathematics,
University of Quy Nhon,
170 An Duong Vuong,
Quy Nhon, Binh Dinh, Vietnam
\smallskip

\noindent {\it E-mail}: \texttt{luongdangky@qnu.edu.vn}

\end{document}